\newtheorem{theorem}{Theorem}
\newtheorem{corollary}[theorem]{Corollary}
\newtheorem{definition}[theorem]{Definition}
\newtheorem{lemma}[theorem]{Lemma}
\newtheorem{proposition}[theorem]{Proposition}
\newenvironment{proof}[1][Proof]{\noindent\textbf{#1.} }{\ \rule{0.5em}{0.5em}}
\begin{document}

\title{On the boundary of closed convex sets in $E^{n}$}
\maketitle

\centerline{\bf {M. Beltagy}}

\centerline{Faculty of Science, Tanta University, Tanta, Egypt}
  
\centerline{\em{E-mail: \tt beltagy50@yahoo.com.}}

\centerline{\bf {S. Shenawy}}

\centerline{Modern Academy for Engineering and Technology in Maadi,} 
\centerline{El-Hadaba El-Wosta, Mokatem Close to Te. Ph. Exchange, Maadi, Egypt.}
\centerline{\em E-mail address: \tt drssshenawy@eng.modern-academy.edu.eg}

\begin{abstract}
In this article a class of closed convex sets in the Euclidean $n$-space which are the convex hull of their profiles is described. Thus a generalization of Krein-Milman theorem\cite{Lay:1982} to a class of closed non-compact convex sets is obtained. Sufficient and necessary conditions for convexity, affinity and starshapedness of a closed set and its boundary have been derived in terms of their boundary points.
\end{abstract}

M.S.C 2000: 53C42, 52A20;

Keywords: extreme point, profile, convex hull, Krein-Milman theorem, affine, starshaped.

\section{An introduction}
	It is very nice to study the boundary of a set $A$ in the Euclidean $n-$space $E^{n}$ and get a global properties of $A$. In the first part of this article, we get sufficient conditions for the convexity, affinity, and starshapedness of $A$ and its boundary in terms of boundary points. Some relations between geometrical and topological properties were derived. The Krein-Milman theorem \cite{Lay:1982} in the $n$-dimensional Euclidean space $E^{n}$ asserts that every compact convex set is the convex hull of its extreme points i.e. given a compact convex set $A\subset E^{n}$, one only need its extreme points $E\left( A\right) $ to recover the set shape. Moreover, if $T$ is a subset of $A$ and the convex hull of $T$ is $A$, then $E\left( A\right) \subset T$. Therefore, $E\left(A\right) $ is the the smallest set of points of $A$ for which $A$ is the convex hull. The compactness condition in this theorem is important. For example, consider the set $A=\left\{ \left( x,y\right) :y\geq \left\vert x\right\vert \right\} $. $A$ is a closed convex subset of $E^{2}$ and has one extreme point namely $\left( 0,0\right) $ i.e. $A$ is not equal to the convex hull of its extreme points. On the other side, we notice that there are many non-compact convex sets which are equal to the convex hull of their extreme points so this theorem is not a characterization of compact convex sets. In \cite{Li:2006} it is proved that there are some planar non-compact convex sets whose minimal convex generating subset does exist. In this article the authors generalize Krein-Milman theorem to the setting of closed convex sets in $E^{n}$ extending the previous work on compact convex sets. The reader may refer to \cite{Balashov:2002, Li:2006, GARCIA:1985, Oates:1971, Lay:1982, Valentine:1964} for more discussions about Krein-Milman theorem in different spaces.

We begin with some definitions from  \cite{MarilynB:2005, Lay:1982, Valentine:1964}. Let $A$ be a subset of the Euclidean $n-$space $E^{n}$. A point $x\in A$ sees $y\in A$ via $A$ if the closed segment $ \left[ xy\right] $ joining $x$ and $y$ is contained in $A$. Set $A$ is called starshaped if and only if for some point $x$ of $A$, $x$ sees each point of $A$ via $A$ and the set of all such points $x$ is called the (convex) kernel of $A$ and is denoted by $\ker A$. $A$ is convex if and only if $A=\ker A$. A set $A$ is said to be affine if $x,y\in A$ implies that the line passing through $x,y$ lies in $A$. The convex hull $C\left( A\right) $ of $A$ is the intersection of all convex subsets of $E^{n}$ that contain $A,$ where the closed convex hull of $A$ is defined as the intersection of all closed convex subsets of $E^{n}$ that contain $A$. Let $A$ be a convex set. A point $x$ in $A$ is called an extreme point of $A$ if there exists no non-degenerate line segment in $A$ that contains $x$ in its relative interior. The set of all extreme points of $A$ is called the profile of $A$ and is denoted by $E(A)$. 

Throughout this paper, $intA,$ $\bar{A},$ $A^{c},$ and $\partial A$ will denote the interior, closure, complement, and boundary of $A$. For two
different points $x$ and $y$, $\overrightarrow{xy}$ will represent the ray emanating from $x$ through $y$, and $\overleftrightarrow{xy}$ will be the
corresponding line.

\section{Results}

We begin this section by the following important definitions.

\begin{definition}
Let $A$ be any non-empty subset of $E^{n}$. Let $p,q$ be two distinct boundary points of $A$. The pair $p$ and $q$ is called parabolic(flat) if $%
\left( pq\right) $ is contained in $\partial A$, hyperbolic if $\left(pq\right) $ is contained in $int\left( A\right) $,and elliptic if $\left(
pq\right) $ is contained in $A^{c}$ where $\left(pq\right)=\left[pq\right]\setminus \left\{p,q\right\}$ is the open segment joinning $p$ and $q$.
\end{definition}

It is easy to find pair of boundary points which is not one of the three
types. Now we begin our results by the following theorem.

\begin{theorem}
\label{th a}Let $A$ be a non-empty closed subset of $E^{n}$. If every pair
of boundary points of $A$ is flat, then $A$and $\partial A$ are both convex
sets. Moreover, if $A$ has a non-empty interior, then $A$ is unbounded, $%
\partial A$ is affine and $A^{c}$ is convex.
\end{theorem}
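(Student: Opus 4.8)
The plan is to first extract the cheap structural consequence of the hypothesis and then reduce the substantive claim to a clean characterization of $A$. Observe that flatness immediately yields convexity of $\partial A$: given two boundary points $p,q$, the hypothesis puts $(pq)\subseteq \partial A$, and since $p,q\in \partial A$ as well we get $[pq]\subseteq \partial A$. Thus any segment with endpoints in $\partial A$ stays in $\partial A$, i.e. $\partial A$ is convex (trivially so if it has at most one point). This disposes of one of the two convexity assertions at once, independently of whether $A$ has interior.

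For convexity of $A$ itself I would split on $intA$. If $intA=\emptyset$ then, $A$ being closed, $A=\bar{A}=\partial A$, so $A$ coincides with the convex set $\partial A$ and we are done. The interesting regime is $intA\neq\emptyset$, where I claim $A$ must be either all of $E^{n}$ or a closed half-space; every conclusion of the theorem, including the ``moreover'' part, then reads off immediately. Throughout I use that, since $A$ is closed, $E^{n}=intA\sqcup\partial A\sqcup A^{c}$ with $A^{c}$ open.

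To pin down $A$ in that regime, first note $\partial A$ has empty interior in $E^{n}$: a ball inside $\partial A$ would lie in $A$ and exhibit its center as an interior point of $A$, contradicting that interior and boundary are disjoint. A convex set with empty interior has affine dimension at most $n-1$, so $\partial A$ lies in some hyperplane $H$ (the case $\partial A=\emptyset$ forces $A=E^{n}$ and is handled separately). Write $E^{n}=H^{+}\cup H\cup H^{-}$ with $H^{\pm}$ the two open half-spaces. The decisive step is a connectedness dichotomy: since each $H^{\pm}$ is connected and misses $\partial A$, it is the disjoint union of the two relatively open pieces $H^{\pm}\cap intA$ and $H^{\pm}\cap A^{c}$, whence one piece is empty; that is, each open half-space lies entirely in $intA$ or entirely in $A^{c}$. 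Because $intA$ is a nonempty open set it cannot sit inside the lower-dimensional $H$, so it meets some side, say $H^{+}\subseteq intA$. If $H^{-}\subseteq intA$ as well then $A^{c}\subseteq H$ would be open and contained in a hyperplane, hence empty, giving $A=E^{n}$; otherwise $H^{-}\subseteq A^{c}$. In the latter case no interior point of $A$ can lie on $H$ (a ball about it would meet $H^{-}$), so $A\cap H=\partial A$ and $A=H^{+}\cup\partial A$. Closedness of $A$ then forces $\overline{H^{+}}=H^{+}\cup H\subseteq A$, so $H\subseteq A\cap H=\partial A\subseteq H$, i.e. $\partial A=H$ and $A=\overline{H^{+}}$ is a closed half-space.

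With $A$ identified as $E^{n}$ or a closed half-space, the remaining claims are automatic: such an $A$ is convex and unbounded, its boundary $\partial A$ is either empty or the hyperplane $H$ and hence affine, and $A^{c}$ is either empty or the open half-space $H^{-}$ and hence convex. I expect the main obstacle to be making the hyperplane-containment and the connectedness dichotomy fully rigorous while cleanly quarantining the degenerate cases ($A=E^{n}$, or $\partial A$ of dimension below $n-1$); once $A$ is shown to be a half-space the theorem is essentially free.
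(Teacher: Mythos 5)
Your proposal is correct, but it takes a genuinely different route from the paper. The paper proves each conclusion separately by contradiction, using the flatness hypothesis repeatedly: convexity of $A$ by producing a pair of boundary points whose open segment misses $A$ (an elliptic pair), unboundedness by intersecting $A$ with a chord of an enclosing ball through an interior point, and affinity of $\partial A$ by two convex-cone constructions; notably, the paper's proof never explicitly addresses the convexity of $A^{c}$. You instead use flatness only once, to get convexity of $\partial A$, and then derive everything from a structural classification: a closed set with convex boundary and non-empty interior must be $E^{n}$ or a closed half-space. Your three ingredients (a convex set with empty interior lies in a hyperplane; each open half-space determined by that hyperplane, being connected and disjoint from $\partial A$, lies entirely in $int A$ or in $A^{c}$; closedness then forces $\partial A$ to be the whole hyperplane) are all sound, and the degenerate cases ($int A=\emptyset$, $\partial A=\emptyset$) are correctly quarantined. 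Your approach buys three things: a conclusion strictly stronger than the theorem ($\partial A$ is not merely affine but a full hyperplane, and $A$ is literally a half-space or all of $E^{n}$); a proof of the $A^{c}$ claim, which the paper leaves implicit; and avoidance of the paper's most delicate step, where the convex cross-section $B=\partial A\cap C\bigl(p,\overline{B}(q,\epsilon)\bigr)$ is asserted to ``determine a hyperplane'' and the cone $C(a,B)$ to have dimension $n$ --- an argument that is sketchy as written. What the paper's route buys is that it works segment-by-segment with the flatness hypothesis and so generalizes more readily to the hyperbolic/elliptic variants treated in its later results, whereas your argument is specific to the flat case.
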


\begin{proof}
The convexity of $\partial A$ is direct and so we consider the convexity of $ A$ only. Suppose that $A$ is not convex i.e. we get two points $p$ and $q$ in $A$ such that $\left( pq\right) \nsubseteq A$. Since $A$ is closed, we find two points $r,s$ in $\partial A$ such that $\left( rs\right) \cap
A=\phi $ which is a contradiction and so $A$ is convex.

To show that $A$ is unbounded, let $p\in int\left( A\right) $. Suppose that $%
A$ is bounded and so we find a real number $\varepsilon $ such that $A$ is
contained in the closed ball $\overline{B}\left( p,\varepsilon \right) $ of
radius $\varepsilon $ and center at $p$. Let $\left[ ab\right] $ be any
chord of $\overline{B}\left( p,\varepsilon \right) $ that runs through $p$.
Since $A$ and $\left[ ab\right] $ are both closed and convex sets, we find $%
a^{\prime }$ and $b^{\prime }$ in $\partial A$ such that $A\cap \left[ ab%
\right] =\left[ a^{\prime }b^{\prime }\right] $ which is a contradiction
since $\left[ a^{\prime }b^{\prime }\right] $ cuts the interior of $A$. Therefore $A$ is unbounded.

We now consider the affinity of $\partial A.$ Suppose that $\partial A$ is
not affine i.e. there are two points $a$ and $b$ in $\partial A$ such that
the line $\overleftrightarrow{ab}$ passing through $a$ and $b$ is not
contained in $\partial A.$ Since $\partial A$ and $\overleftrightarrow{ab}$
are closed convex sets, there are $a^{\prime }$ and $b^{\prime }$ in $%
\partial A$ such that 
\[
\left[ ab\right] \subset \partial A\cap \overleftrightarrow{ab}=\left[
a^{\prime }b^{\prime }\right] 
\]

Let $p\in \overleftrightarrow{ab}\backslash \left[ a^{\prime }b^{\prime }\right] $ (i.e. $p\in int\left( A\right) \cap \overleftrightarrow{ab}$ or $%
p\in A^{c}\cap \overleftrightarrow{ab}$) and assume that $p\in \overrightarrow{b^{\prime }a^{\prime }}$. If $p\in int(A)\cap \overleftrightarrow{ab}$, then the convex cone $C\left( b,\overline{B}\left( p,\varepsilon \right) \right) $ with vertex $b$ and base $\overline{B}\left( p,\varepsilon \right) $ for a sufficiently small $\varepsilon $ shows \ that $a$ is an interior point which is a contradiction see Figure \ref{07-01}.

\begin{figure}[h]
\begin{center}
\includegraphics{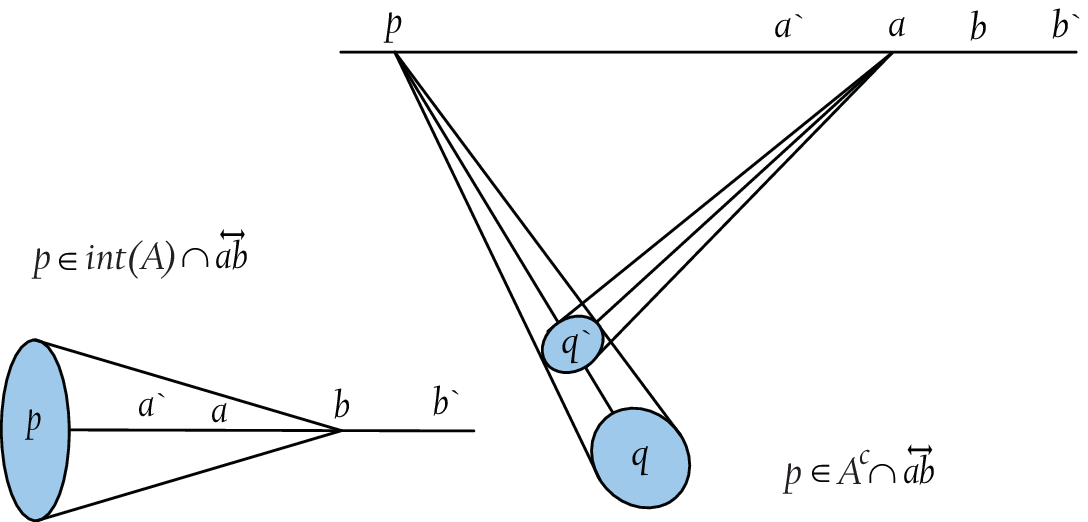}
\caption{Two cases for the point $p$} 
\label{07-01}
\end{center}
\end{figure}

Now we take $p\in A^{c}\cap \overleftrightarrow{ab}$. Let $q$ be a point of $int\left( A\right)$. The sets $\left[ pq\right] $ and $A$ are closed convex sets and so there is a point $q^{\prime }\in \partial A$ such that $\left[ pq\right] \cap A=\left[ q^{\prime }q\right] $. This implies that the
intersection $B=\partial A\cap C\left( p,\overline{B}\left( q,\epsilon \right) \right) $, for a small $\epsilon $, is a non-empty closed convex
set since $\partial A$ is convex. Therefore, $B$ is a convex cross section of $C\left( p,\overline{B}\left( q,\epsilon \right) \right)$ that determines a hyperplane $H$ whose intersection with $C\left( p,\overline{B}\left( q,\epsilon \right) \right)$ is $B$. At least one of the points $a$ and $b$(say $a$) does not lie in $H$ otherwise the line $\overleftrightarrow{ab}$ lies in $H$ which contradicts the fact that $p$ is the vertex of the convex cone $C\left( p,\overline{B}\left( q,\epsilon \right) \right) $. Now, the convex cone $C\left( a,B\right) $ has dimension $n$ i.e. $C\left( a,B\right) $ has interior points which is a contradiction since both $a$ and $B$ are in $\partial A$ see Figure \ref{07-01}. This contradiction completes the proof.
\end{proof}

\begin{corollary}
Let $A$ be a non-empty open subset of $E^{n}$ and $int(\overline{A})=A$. If
every pair of boundary points of $A$ is flat, then $A$ is unbounded convex
set and $\partial A$ is affine.
\end{corollary}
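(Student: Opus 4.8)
The plan is to reduce the statement to Theorem~\ref{th a} applied to the closure $\overline{A}$. First I would set $B=\overline{A}$, which is a non-empty closed set since $A$ is non-empty. The key preliminary observation is that $A$ and $B$ share the same boundary: because $A$ is open we have $\partial A=\overline{A}\setminus A$, while $\partial B=\overline{B}\setminus int\left( B\right) =\overline{A}\setminus int\left( \overline{A}\right)$, and the regularity hypothesis $int\left( \overline{A}\right) =A$ gives $\partial B=\overline{A}\setminus A=\partial A$. Since the notion of a flat pair refers only to the (now common) boundary, the assumption that every pair of boundary points of $A$ is flat immediately yields that every pair of boundary points of $B$ is flat as well.

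Next I would invoke Theorem~\ref{th a} for $B$. It gives that $B$ and $\partial B$ are convex; moreover, since $int\left( B\right) =int\left( \overline{A}\right) =A$ is non-empty, the ``moreover'' clause yields that $B$ is unbounded, $\partial B$ is affine and $B^{c}$ is convex. It then remains only to transfer these conclusions back to $A$ itself.

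For the affinity of $\partial A$ there is nothing further to do, as $\partial A=\partial B$ has already been shown affine. For the convexity of $A$, I would use $A=int\left( \overline{A}\right) =int\left( B\right)$ together with the standard fact that the interior of a convex set is convex. For unboundedness, I would argue by contradiction: if $A$ were bounded, then its closure $\overline{A}=B$ would be bounded too, contradicting the unboundedness of $B$ established above.

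The argument is essentially routine once the reduction is in place, so I do not anticipate a serious obstacle. The single point requiring care is the identity $\partial A=\partial \overline{A}$: the regularity hypothesis $int\left( \overline{A}\right) =A$ is precisely what makes it hold, and it is this identity that legitimizes carrying the flatness hypothesis from $A$ over to $\overline{A}$. Without that hypothesis the two boundaries could differ and the transfer would break down.
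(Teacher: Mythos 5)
Your proposal is correct and follows essentially the same route as the paper: apply Theorem~\ref{th a} to $\overline{A}$, then transfer the conclusions back to $A$ via the identity $\partial A=\partial\overline{A}$, the fact that the interior of a convex set is convex, and the observation that a bounded $A$ would force a bounded $\overline{A}$. The only difference is that you justify the identity $\partial A=\partial\overline{A}$ explicitly from the hypothesis $int\left(\overline{A}\right)=A$, whereas the paper simply asserts that $\overline{A}$ satisfies the hypotheses of the theorem.
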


\begin{proof}
It is clear that $\overline{A}$ satisfies the hypothesis of Theorem \ref{th
a}. Therefore $\partial \overline{A}=\partial A$ is affine and $\overline{A}$
is convex and unbounded. Note that if $A$ is bounded, then $\overline{A}$ is
also bounded and equivalently, $\overline{A}$ is unbounded implies that $A$
is unbounded. Since the interior of a closed convex set is also convex, the
convexity of $\overline{A}$ implies that $A$ is convex.
\end{proof}

\begin{theorem}
Let $A$ be a non-empty closed subset of $E^{n}$. If every pair of boundary
points of $A$ is hyperbolic, then $A$ is strictly convex.
\end{theorem}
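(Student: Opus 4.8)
The plan is to establish two things in turn: first that $A$ is convex, and then that $\partial A$ contains no nondegenerate segment, so that every boundary point is extreme (which is the content of strict convexity). The second part will fall out almost immediately from the definition of a hyperbolic pair, so the real work is the convexity step, and I would model it on the convexity argument already used in Theorem \ref{th a}.

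For convexity I would argue by contradiction. Suppose $A$ is not convex, so there exist $p,q\in A$ with $\left(pq\right)\nsubseteq A$. I would parametrize the chord $\left[pq\right]$ and exploit the closedness of $A$: the set of parameters landing in $A$ is closed and contains both endpoints, so if some interior point of the chord lies outside $A$ there is a maximal open sub-interval missing $A$. Its two endpoints $r,s$ then lie in $A$ but are limits of points of $A^{c}$, hence $r,s\in\partial A$, while $\left(rs\right)\subseteq A^{c}$. Thus $r,s$ form an elliptic pair, contradicting the hypothesis that every pair of boundary points is hyperbolic. Hence $A$ is convex. (If $A$ has fewer than two distinct boundary points the hypothesis is vacuous and the statement is trivial, so in the genuine case $int\left(A\right)\neq\phi$, which is what makes the hyperbolic condition meaningful.)

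For strict convexity I would invoke the hyperbolic hypothesis directly. Suppose, toward a contradiction, that $\partial A$ contained a nondegenerate segment $\left[uv\right]$. Then $u,v\in\partial A$ and $\left(uv\right)\subseteq\partial A$, whence $\left(uv\right)\cap int\left(A\right)=\phi$; but hyperbolicity of the pair $u,v$ forces $\left(uv\right)\subseteq int\left(A\right)$, a contradiction. Equivalently, for any two distinct boundary points the open segment between them lies in $int\left(A\right)$, which says precisely that no boundary point is a relative-interior point of a segment lying in $\partial A$, i.e. every boundary point is extreme. Together with the convexity already proved, this yields strict convexity of $A$.

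The main obstacle I anticipate lies entirely in the convexity step, specifically in the careful extraction of the two boundary points $r,s$ on the chord and the verification that they are genuine boundary points rather than interior points; once an elliptic pair has been produced the contradiction is immediate, and the strict-convexity conclusion is then essentially a restatement of the hyperbolic hypothesis.
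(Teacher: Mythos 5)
Your proof is correct and takes essentially the same route as the paper: convexity by contradiction, extracting from a chord that leaves $A$ two boundary points $r,s$ with $\left(rs\right)\cap A=\phi$ (an elliptic pair, contradicting the hyperbolic hypothesis), with strict convexity then read off directly from the hypothesis. The only difference is one of detail: the paper simply asserts the existence of such $r,s$ and calls the strict-convexity step ``direct,'' whereas you fill in the maximal-interval extraction and the no-boundary-segment argument explicitly.
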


\begin{proof}
It is enough to prove that $A$ is convex since the strict convexity of $A$
is direct. Now, we assume that $A$ is not convex i.e. there are $p,q$ in $A$
such that $\left[ pq\right] $ is not contained in $A.$ Since $A$ is closed,
there are $p^{\prime },q^{\prime }$ in $\partial A$ such that $\left(
p^{\prime }q^{\prime }\right) \cap A=\phi $ which is a contradiction and $A$
is convex.
\end{proof}

\begin{corollary}
Let $A$ be a non-empty closed subset of $E^{n}$. $A$ is convex if and only
if each pair of boundary points is either flat or hyperbolic.
\end{corollary}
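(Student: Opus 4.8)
The plan is to prove both directions of this biconditional separately. The statement is: a non-empty closed set $A$ is convex if and only if every pair of boundary points is either flat or hyperbolic.

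For the forward direction, suppose $A$ is convex. I would take any two distinct boundary points $p,q\in\partial A$ and consider the open segment $(pq)$. Since $A$ is convex and $p,q\in A$, the whole segment $[pq]$ lies in $A$, so in particular $(pq)\subseteq A$. This rules out the elliptic case entirely (no point of the open segment can be in $A^{c}$). Thus each interior point of the segment lies in $A=\mathrm{int}(A)\cup\partial A$. The remaining work is to show we cannot have a mixture—that the segment cannot have some points in $\mathrm{int}(A)$ and others in $\partial A$—so that the pair is cleanly flat (all of $(pq)$ in $\partial A$) or hyperbolic (all of $(pq)$ in $\mathrm{int}(A)$). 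Here I would use the standard convexity fact that if $x\in\mathrm{int}(A)$ and $y\in\overline{A}=A$, then the half-open segment $[x,y)$ lies in $\mathrm{int}(A)$. Applying this along the segment shows the set of interior points of $(pq)$ is both open and relatively closed in $(pq)$; since $(pq)$ is connected, $(pq)$ is either entirely in $\mathrm{int}(A)$ (hyperbolic) or entirely in $\partial A$ (flat). This handles one direction.

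For the converse, I would assume every pair of boundary points is flat or hyperbolic and show $A$ is convex. The cleanest route is to reduce to the two preceding theorems rather than argue from scratch. Partition the boundary-point pairs: let $F$ be the (flat) pairs and $H$ be the (hyperbolic) pairs. I would first observe that for \emph{any} pair $p,q$ that is either flat or hyperbolic, the open segment $(pq)$ lies in $A$ (either in $\partial A\subseteq A$ or in $\mathrm{int}(A)\subseteq A$), hence $[pq]\subseteq A$ using closedness. This immediately gives convexity of $A$ by the same closed-convex criterion used in the proofs of Theorem~\ref{th a} and the hyperbolic theorem: if $A$ were not convex, there would be $p,q\in A$ with $[pq]\not\subseteq A$, and by closedness one extracts boundary points $p',q'\in\partial A$ with $(p'q')\cap A=\phi$, i.e.\ an elliptic pair, contradicting the hypothesis.

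The main obstacle, and the step deserving the most care, is the extraction argument in the converse: given a chord of $A$ that leaves $A$, I must produce a genuine pair of \emph{boundary} points whose open segment misses $A$ entirely. This is exactly the maneuver already invoked in the two earlier theorems, so I would cite it rather than reprove it; the subtlety is ensuring the resulting pair is of neither admissible type, which forces it to be elliptic (or at least not flat and not hyperbolic) and thus contradicts the hypothesis. A clean way to present the whole corollary is therefore: the forward direction is the routine connectivity argument above, and the backward direction follows because flat-or-hyperbolic excludes the elliptic behavior that the earlier theorems showed is the only way convexity can fail for a closed set. I expect the forward direction's connectedness argument to be the most technically delicate piece to write rigorously, while the converse is essentially a restatement of the earlier extraction lemma.
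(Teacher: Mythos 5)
Your proof is correct and is essentially the argument the paper intends: the paper states this corollary without proof, precisely because the backward direction is the same elliptic-pair extraction argument used in the proofs of Theorem \ref{th a} and the hyperbolic theorem (a chord leaving the closed set $A$ yields boundary points $p',q'$ with $(p'q')\cap A=\phi$, contradicting flat-or-hyperbolic), and the forward direction is the standard convexity fact that a segment with endpoints in a convex set either meets $\mathrm{int}(A)$ nowhere or has its whole open part in $\mathrm{int}(A)$. Your write-up fills in both halves correctly; the only stylistic remark is that in the forward direction the connectedness argument is unnecessary overhead, since the fact you cite (that $x\in\mathrm{int}(A)$ and $y\in A$ imply $[x,y)\subset\mathrm{int}(A)$) already shows directly that one interior point on $(pq)$ forces all of $(pq)$ into $\mathrm{int}(A)$.
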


Since the interior of a closed convex set is again convex, this result is still true for open sets such that $int\left( \overline{A}\right) =A$ i.e.
if an open set $A$ satisfies $int\left( \overline{A}\right) =A$  and every pair of $\partial A=\partial \overline{A}$ is either flat or hyperbolic then its closure $\overline{A}$ satisfies the hypotheses of the above corollary and hence $\overline{A}$ and $A$ are both convex sets.
The following example shows that the closeness is important. Let $A$ be a subset of $E^{2}$ defined by $A=\left\{ \left( x,y\right) :0\prec x\prec 1,0\prec y\prec 1\right\} \cup \left\{ \left( 0,0\right) ,\left( 1,1\right) ,\left(1,0\right) ,\left( 0,1\right) \right\} .$ $A$ is neither closed nor open and every pair of boundary points is either flat or hyperbolic but $A$ is not convex.

\begin{corollary}
Let $A$ be a non-empty open subset of $E^{n}$ and $int(\overline{A})=A$. If every pair of boundary points of $A$ is elliptic, then $A^{c}$ is strictly convex. Also, if every pair of boundary points of $A$ is either elliptic or parabolic, then $A^{c}$ is convex.
\end{corollary}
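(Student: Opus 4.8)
The plan is to transfer the whole problem to the complement. Since $A$ is open, $A^{c}$ is closed, so I would apply the earlier results for closed sets — the theorem that a closed set all of whose boundary pairs are hyperbolic is strictly convex, and the characterization of closed convex sets by flat-or-hyperbolic boundary pairs — directly to $A^{c}$, after reinterpreting the three pair types. The two facts that drive this are $\partial(A^{c})=\partial A$ and $\text{int}(A^{c})=E^{n}\setminus\overline{A}$, giving the partition $E^{n}=\text{int}(A)\sqcup\partial A\sqcup\text{int}(A^{c})$; the standing hypothesis $\text{int}(\overline{A})=A$ is what guarantees $\partial\overline{A}=\partial A$ and rules out isolated or interior boundary points, so that the boundary points of the closed set $A^{c}$ are exactly the points being classified. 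Under this dictionary a parabolic pair of $A$ becomes a flat pair of $A^{c}$, while an elliptic pair of $A$ should correspond to a hyperbolic pair of $A^{c}$.

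First I would establish convexity of $A^{c}$ directly, imitating the contradiction argument used for hyperbolic pairs. Assuming $A^{c}$ is not convex, I pick $p,q\in A^{c}$ with $[pq]\cap A\neq\emptyset$; since $A^{c}$ is closed and $p,q\notin A$, the maximal relatively open subinterval of $[pq]$ contained in $A$ has endpoints $r,s$ lying in $\overline{A}\cap A^{c}=\partial A$, so that $(rs)\subseteq A$. But by hypothesis the pair $r,s$ is elliptic (or elliptic/parabolic), hence $(rs)\subseteq A^{c}$, contradicting $(rs)\subseteq A$. Because a parabolic open segment lies in $\partial A\subseteq A^{c}$, this single argument simultaneously covers the hypothesis ``elliptic or parabolic'' — proving the convexity claim of the second assertion — and yields convexity of $A^{c}$ in the first assertion as well.

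The real content sits in the strict convexity of the first assertion. Having $A^{c}$ convex, I would invoke the standard fact that the open segment joining two distinct boundary points of a convex set lies either entirely in the interior or entirely in the boundary. For $p,q\in\partial A=\partial(A^{c})$ the elliptic hypothesis gives $(pq)\subseteq A^{c}$; the boundary alternative would force $(pq)\subseteq\partial A$, i.e.\ the pair would be parabolic, which the genuinely elliptic hypothesis excludes. Hence $(pq)\subseteq\text{int}(A^{c})$ for every boundary pair, which is exactly strict convexity — equivalently, every boundary pair of the closed set $A^{c}$ is hyperbolic, so the theorem on hyperbolic pairs applies to $A^{c}$.

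I expect the main obstacle to be precisely this upgrade from $(pq)\subseteq A^{c}$ to $(pq)\subseteq\text{int}(A^{c})$. Because $A$ is open, $\partial A\subseteq A^{c}$, so the literal reading of ``elliptic'' does not by itself forbid the open segment from grazing $\partial A$, and the two assertions would collapse into one unless the elliptic case is understood as disjoint from the parabolic case. The convexity of $A^{c}$ together with the all-interior-or-all-boundary lemma is what cleanly resolves this and keeps the conclusions — strict convexity versus mere convexity — correctly separated.
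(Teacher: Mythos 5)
Your proposal is correct, and it is worth knowing that the paper offers no proof of this corollary at all: it is stated as an immediate consequence of the preceding results, the evident intent being to apply the hyperbolic-pairs theorem and the flat-or-hyperbolic characterization to the closed set $A^{c}$ under the dictionary elliptic $\leftrightarrow$ hyperbolic, parabolic $\leftrightarrow$ flat. You follow that same complementation route, but you supply exactly the two steps that the naive transfer needs and the paper glosses over. First, since $A$ is open we have $\partial A\subset A^{c}$, so ``$(pq)\subset A^{c}$'' does not literally translate into ``$(pq)\subset int(A^{c})$'': a priori an elliptic segment could lie partly in $\partial A$ and partly in $int(A^{c})$, in which case the pair is neither flat nor hyperbolic for $A^{c}$ and the closed-set corollary cannot be cited directly. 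Your maximal-component argument (a component $(rs)$ of $[pq]\cap A$ has endpoints $r,s\in\overline{A}\cap A^{c}=\partial A$ with $(rs)\subset A$, contradicting the hypothesis that $(rs)\subset A^{c}$) proves convexity of $A^{c}$ without ever needing the dictionary, and the interior-or-boundary dichotomy for chords of a closed convex set is precisely what upgrades $(pq)\subset A^{c}$ to $(pq)\subset int(A^{c})$. Second, you correctly observe that the strict-convexity claim is true only under the disjoint reading of ``elliptic'': with the paper's literal definition, every parabolic pair of an open set is also elliptic (again because $\partial A\subset A^{c}$), so an open half-space --- all of whose boundary pairs are parabolic, hence literally elliptic --- would make the first assertion false, since the closed half-space $A^{c}$ is not strictly convex. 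So your two additions are not pedantry; they are what makes the corollary correct as stated. One small remark: the identities $\partial(A^{c})=\partial A$ and $int(A^{c})=E^{n}\setminus\overline{A}$ hold for any open $A$, so the regularity hypothesis $int(\overline{A})=A$ is not actually used anywhere in your argument; it is simply inherited from the framework of the surrounding results, and attributing the identity $\partial(A^{c})=\partial A$ to it is the one loose claim in your write-up.
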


\begin{proposition}
Let $A$ be a non-empty closed subset of $E^{n}.$ If there is a point $p\in A$ that sees $\partial A$ via $A$, then $A$ is starshaped.
\end{proposition}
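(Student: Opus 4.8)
The plan is to show that the given point $p$ itself lies in $\ker A$; that is, to prove that $p$ sees \emph{every} point of $A$ via $A$, not merely the boundary points. Since the hypothesis already supplies this conclusion for boundary points, the entire content of the proof is to upgrade it to interior points, and the natural mechanism is to reduce a segment ending at an interior point to a segment ending at a boundary point, where the hypothesis directly applies.

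Concretely, I would fix an arbitrary $x\in A$ and argue by contradiction that $[px]\subseteq A$. Suppose not; then since $p,x\in A$ but $[px]\nsubseteq A$, there is a point $z\in(px)$ with $z\in A^{c}$. Parametrize the segment by $f(t)=p+t(x-p)$ for $t\in[0,1]$, so that $f(0)=p$, $f(1)=x$, and $z=f(t_{z})$ for some $t_{z}\in(0,1)$ with $f(t_{z})\notin A$.

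Next I would travel from $z$ toward $x$ and catch the first point of $A$ encountered: set $t_{2}=\inf\{\,t\in[t_{z},1]:f(t)\in A\,\}$. This set is nonempty (it contains $t=1$) and closed (as $A$ is closed and $f$ is continuous), so the infimum is attained and $b:=f(t_{2})\in A$; moreover $t_{2}>t_{z}$ because $f(t_{z})=z\notin A$, and $f(t)\in A^{c}$ for every $t\in[t_{z},t_{2})$. Letting $t\uparrow t_{2}$ exhibits $b$ as a limit of points of $A^{c}$, so $b\notin int(A)$, and combined with $b\in A$ this gives $b\in\partial A$.

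The contradiction then falls out of the hypothesis: since $b\in\partial A$, the point $p$ sees $b$, i.e. $[pb]\subseteq A$. But $z=f(t_{z})$ with $0<t_{z}<t_{2}$ lies in $(pb)\subseteq[pb]$, forcing $z\in A$ and contradicting $z\in A^{c}$. Hence $[px]\subseteq A$ for every $x\in A$, so $p\in\ker A$ and $A$ is starshaped. The one step requiring care is the identification of $b$ as a boundary point, which is exactly where the closedness of $A$ is used and is the only real obstacle; the degenerate case $\partial A=\emptyset$ forces $A=E^{n}$ (the only nonempty clopen subset of $E^{n}$), which is convex and therefore trivially starshaped, so it needs no separate treatment.
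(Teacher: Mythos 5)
Your proof is correct and follows essentially the same route as the paper's: assume $p \notin \ker A$, use the closedness of $A$ to locate a boundary point on the offending segment beyond the gap, and contradict the hypothesis that $p$ sees $\partial A$ via $A$. Your version is in fact slightly more careful than the paper's (which extracts two boundary points $p', q'$ with $(p'q') \cap A = \emptyset$ and asserts $p$ sees neither, whereas your single first-entry point $b$ with $z \in (pb)$ gives the contradiction cleanly, and you also dispose of the degenerate case $\partial A = \emptyset$), but the underlying idea is identical.
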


\begin{proof}
We claim that $p\in \ker A$. Suppose that $p$ is not in $\ker A$ i.e. there
is a point $q\in A$ such that $\left[ pq\right] $ is not contained in $A$. Since $A$ is closed, there are two points $p^{\prime}$ and $q^{\prime}$ in $\partial A\cap \left[ pq\right] $ such that $\left( p^{\prime}q^{\prime}\right)\cap A=\phi$. Thus $p$ does not see neither $p^{\prime}$ nor $q^{\prime}$. This contradicts the fact that $p$ sees $\partial A$ via $A$ and the proof is complete.
\end{proof}

It is clear that the converse of this result is also true. Thus we can say that this proposition is a characterization of the kernel of the closed starshaped sets. This means that the kernel of a closed starshaped set $A$ is only the points of $A$ that see $\partial A$. The
following corollary is direct.

\begin{corollary}
Let $A$ be a non-empty closed subset of $E^{n}.$ If $\partial A$ is starshaped, then $\ker \left( \partial A\right) \subset \ker A.$
\end{corollary}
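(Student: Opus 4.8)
The plan is to reduce the statement to the preceding Proposition, whose proof in fact establishes something slightly stronger than its formal statement: any point of a closed set $A$ that sees $\partial A$ via $A$ must lie in $\ker A$. So it suffices to take an arbitrary $p\in\ker(\partial A)$ and verify that $p$ sees $\partial A$ via $A$; membership in $\ker A$ will then follow from the Proposition.

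First I would record the elementary observation that, because $A$ is closed, $\partial A\subseteq\overline{A}=A$. In particular every point of $\partial A$ — including $p$ itself — belongs to $A$, so $p$ is a legitimate candidate for membership in $\ker A$. This is the only place where the closedness hypothesis on $A$ is used.

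Next I would unwind the hypothesis $p\in\ker(\partial A)$. By definition this means that $p$ sees every point $y\in\partial A$ via $\partial A$, i.e. $\left[py\right]\subseteq\partial A$ for all $y\in\partial A$. Combining this with the inclusion $\partial A\subseteq A$ gives $\left[py\right]\subseteq A$ for every $y\in\partial A$; that is, $p$ sees $\partial A$ via $A$.

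Finally I would invoke the Proposition: a point of the closed set $A$ that sees $\partial A$ via $A$ lies in $\ker A$, hence $p\in\ker A$. Since $p\in\ker(\partial A)$ was arbitrary, this yields $\ker(\partial A)\subseteq\ker A$, as claimed. I do not anticipate any genuine obstacle here — the corollary really is \emph{direct}. The only two points deserving care are that one must read off from the \emph{proof} of the Proposition, rather than merely its statement, the implication ``sees $\partial A$ via $A$ $\Rightarrow$ lies in $\ker A$'', and that the containment $\partial A\subseteq A$ genuinely relies on $A$ being closed.
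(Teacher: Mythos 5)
Your proof is correct and is precisely the argument the paper intends when it calls this corollary ``direct'': since $A$ is closed, $\partial A\subseteq A$, so any $p\in\ker\left(\partial A\right)$ sees $\partial A$ via $A$, and the preceding Proposition (whose proof indeed establishes $p\in\ker A$, not merely starshapedness of $A$) then gives $p\in\ker A$. Your observation that one must appeal to the \emph{proof} of the Proposition --- or equivalently to the paper's remark that it characterizes the kernel of a closed starshaped set --- rather than to its bare statement is exactly the right point of care.
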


In the light of the above results, one can test the convexity and starshapedness of a closed set $A$ using its boundary points. In the next part a minimal subset of these boundary points will build $A$ up from inside.

\begin{theorem}
\label{th b} Let $A$ be a non-empty closed convex subset of $E^{n}$. If $A$ has no hyperplane, then $A=C\left( \partial A\right)$.
\end{theorem}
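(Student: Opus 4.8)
First I would record the trivial inclusion: since $\partial A\subseteq A$ and $A$ is convex, every convex set containing $\partial A$ must meet $A$, and in fact $C(\partial A)\subseteq A$. So the entire content of the theorem is the reverse inclusion $A\subseteq C(\partial A)$. I would then dispose of the degenerate case: if $int(A)=\emptyset$ then $\partial A=\overline{A}=A$, so $C(\partial A)=C(A)=A$ and we are done. Hence I may assume $int(A)\neq\emptyset$; moreover, since $A$ contains no hyperplane we have $A\neq E^{n}$, so $\partial A\neq\emptyset$. Because $A$ is closed we have $A=int(A)\cup\partial A$, and therefore it suffices to prove that every interior point of $A$ lies in $C(\partial A)$.

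The plan is to produce a single direction $v$ such that, for \emph{every} $p\in int(A)$, the line $\ell$ through $p$ parallel to $v$ meets $A$ in a \emph{bounded} segment. Granting such a $v$, fix $p\in int(A)$: since $A$ and $\ell$ are closed and convex, $\ell\cap A=[ab]$ is a closed segment, and boundedness makes $a,b$ genuine endpoints. Beyond each endpoint the line leaves $A$, so $a,b\in\partial A$; and as $p$ is interior it lies strictly between them, hence is a proper convex combination of $a$ and $b$. Thus $p\in C(\partial A)$, giving $int(A)\subseteq C(\partial A)$ and, with the first paragraph, the theorem.

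The heart of the matter, and the step I expect to be the main obstacle, is the existence of such a bounded-chord direction $v$; this is exactly where the no-hyperplane hypothesis must be spent. Call $v$ a \emph{recession direction} if $p+tv\in A$ for all $t\geq 0$ (by convexity and closedness this is independent of $p\in A$), and let $R$ denote the set of recession directions, a closed convex cone. The chord through an interior point in direction $v$ is unbounded in one of the two senses precisely when $v\in R$ or $-v\in R$, so a good direction fails to exist exactly when $R\cup(-R)=E^{n}$. I would show this forces $A$ to contain a hyperplane. Set $L=R\cap(-R)$, the lineality subspace, and pass to the quotient $E^{n}/L$, where $R$ induces a \emph{pointed} closed convex cone $\overline{R}$ with $\overline{R}\cup(-\overline{R})=E^{n}/L$. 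A pointed closed convex cone admits a strictly supporting functional, i.e. a $u$ with $\langle u,x\rangle>0$ for every nonzero $x\in\overline{R}$; then the hyperplane $\{\langle u,\cdot\rangle=0\}$ meets $\overline{R}\cup(-\overline{R})$ only at the origin, which is impossible unless $\dim(E^{n}/L)\leq 1$, that is $\dim L\geq n-1$. A short case analysis then identifies any closed convex set whose lineality space has dimension $\geq n-1$ as a hyperplane, a slab, or a half-space, each of which contains a hyperplane — contradicting the hypothesis. Hence $R\cup(-R)\neq E^{n}$, a bounded-chord direction exists, and the argument closes.
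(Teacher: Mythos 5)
Your proof is correct, but it follows a genuinely different route from the paper's. The paper argues topologically: since the convex set $A$ is connected, it suffices to show that $C\left(\partial A\right)$ is both open and closed in the relative topology on $A$, and both halves are proved by separation --- if a point of $A$ could be separated from $\overline{C\left(\partial A\right)}$ by a hyperplane $H$, then, because $A$ contains no hyperplane, $H$ must leave $A$, and a segment joining a point of $H$ inside $A$ to a point of $H$ outside $A$ would cross $\partial A$, contradicting the separation. You instead work with the recession cone $R$: the no-hyperplane hypothesis is spent exactly once, to prove $R\cup(-R)\neq E^{n}$, after which any direction $v\notin R\cup(-R)$ gives through every interior point a bounded chord whose endpoints lie in $\partial A$. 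Your route buys a stronger conclusion (every point of $A$ is a convex combination of at most \emph{two} boundary points, so $A$ is even the union of segments with endpoints in $\partial A$, not merely their convex hull), makes transparent where the hypothesis enters, and replaces the paper's delicate relative-topology bookkeeping with standard structure theory; the cost is reliance on heavier (though standard and correctly applied) facts: base-point independence of the recession cone of a closed convex set, pointedness and closedness of the quotient cone, and existence of a strictly positive functional on a pointed closed cone. Two small remarks: your final case analysis can be compressed, since once $\dim L\geq n-1$ any $a\in A$ gives $a+L\subseteq A$, which already contains an $(n-1)$-dimensional affine subspace, i.e.\ a hyperplane contained in $A$; and your list (hyperplane, slab, half-space) omits the case $A=E^{n}$ arising when $\dim L=n$, though that too contains a hyperplane, so nothing breaks.
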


\begin{proof}
Since $A$ is convex, $A$ is connected. We will prove that $C\left( \partial A\right) $ is open and closed in the relative topology on $A$ and hence $A=C\left( \partial A\right)$.

First, we prove that $C\left( \partial A\right) $ is open in $A$. Let $p\in C\left( \partial A\right) \subset A$. We have the following cases:

\begin{enumerate}
\item $p\in C\left( \partial A\right) \cap int\left( A\right) $: Let $B_{\delta}=B\left(p,\delta \right)\cap A$. In this case there exists a real number $\delta $ such that $B\left( p,\delta \right) \subset A$ and so $B_{\delta}=B\left(p,\delta \right)$. Suppose that $p$ is not an interior point of $C(\partial A)$ i.e. for any $\delta $, $B_{\delta}$ is not contained in $C\left( \partial A\right)$ and so $p$ is a boundary point of $C(\partial A)$. Therefore, there is a supporting hyperplane $H_{1}$ of $\overline{C(\partial A)}$(the closure of $C(\partial A)$ is a closed convex subset of $A$) at $p$ and $\overline{C(\partial A)}$ is contained in a closed half-space with boundary $H_{1}$. Let $x$ be any point of $B\left( p,\delta \right) $ that lies on the other side of $H_{1}$ and let $H_{2}$ be a parallel hyperplane to $H_{1}$ at $x$. Since $A$ does not contain a hyperplane, we find a point $y\in H_{2}\setminus A$. The line segment $\left[ xy\right] $ cuts $\partial A$ at a point $z\in H_{2}$ which contradicts the fact that $H_{1}$ supports $\overline{C\left(\partial A\right)}$. This contradiction implies that $p$ is an interior point of $C\left( \partial A\right) $ in the relative topology of $A$.
\item $p\in C\left( \partial A\right) \cap \partial A$: in this case, $B\left( p,\delta \right) $ has a non-empty intersection with $A$ for any real number $\delta $. Let $B_{\delta }=B\left( p,\delta \right)\cap A$. Suppose that $p$ is not an interior point of $C\left( \partial A\right)$. Then, for any $\delta$, the set $B_{\delta }$ has a point $x$ which is not in $C\left( \partial A\right)$. But $\overline{C\left( \partial A\right)} $ is closed convex set and $x\notin \overline{C\left( \partial A\right)}$, and so we get a hyperplane $H$ passing through $x$ that separates $x$ and $\overline{C\left( \partial A\right)}$. Since $A$ does not have a hyperplane, there is a point $y$ in $H\setminus A$ where $[xy]$ cuts $\partial A$. Thus $H$ cuts $\partial A$ and so $H$ cuts $C\left( \partial A\right) $ which is a contradiction and so $p$ is an interior point of $C\left( \partial A\right) $ in the relative topology on $A$.
\end{enumerate}

This discussion above implies that $C\left( \partial A\right) $ is an open set in $A$.
Now, we want to prove that $C\left( \partial A\right) $ is closed in $A$. Let $p$ be a boundary point of $C\left( \partial A\right)$.
If $p\in \partial A,$ then $p\in C\left( \partial A\right) .$ Let $p\in intA $, then there is a small positive real number $\delta $ such that $B\left( p,\delta \right) \subset A.$ Since $p$ is a boundary point of $C\left( \partial A\right) $, $B\left( p,\delta \right) \neq B\left( p,\delta
\right) \cap C\left( \partial A\right) \neq \phi $. Therefore, we find a point $x$ in $B\left( p,\delta \right) $ which is not in $C\left( \partial
A\right) $. Since $\overline{C\left( \partial A\right)} $ is a closed convex set, we get a hyperplane $H$ passing through $x$ and does not intersect $C\left( \partial A\right) $. But $A$ does not have a hyperplane and so $H$ cuts $\partial A$ which is a contradiction and $p\in C\left( \partial A\right) $ i.e. $C\left( \partial A\right) $ is closed in the relative topology on $A$ and the proof is complete.
\end{proof}

In general, sets need not have extreme points. The following proposition gives a sufficient condition for the existence of extreme points.

\begin{proposition}
Let $A$ be a non-empty closed convex subset of $E^{n}$. $A$ contains at least one extreme point if and only if $A$ has no line.
\end{proposition}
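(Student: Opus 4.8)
The plan is to prove both implications, treating the ``only if'' direction by its contrapositive and the ``if'' direction by induction on dimension. For the contrapositive of the ``only if'' direction, suppose $A$ contains a full line $L=\left\{x_{0}+tv:t\in\mathbb{R}\right\}$ for some direction $v$. I would first establish that $v$ is a recession direction of $A$ in both senses: for every $a\in A$ and every $t\in\mathbb{R}$ the point $a+tv$ lies in $A$. This follows because $a+tv$ is the limit as $\lambda\to 0^{+}$ of the convex combinations $\left(1-\lambda\right)a+\lambda\left(x_{0}+\tfrac{t}{\lambda}v\right)\in A$, and $A$ is closed. Consequently every $a\in A$ is the midpoint of the non-degenerate segment $\left[a-v,a+v\right]\subset A$, so no point of $A$ is extreme. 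Hence, if $A$ possesses an extreme point, $A$ can contain no line.

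For the ``if'' direction I would argue by induction on the dimension $d$ of the affine hull $\mathrm{aff}\left(A\right)$. If $d=0$, then $A$ is a single point, which is trivially an extreme point. Assume the statement for all closed convex line-free sets of affine dimension less than $d$, and let $A$ have dimension $d\geq 1$ and contain no line. Working inside $\mathrm{aff}\left(A\right)\cong E^{d}$, the set $A$ has non-empty relative interior. Since $\mathrm{aff}\left(A\right)$ itself contains lines while $A$ does not, $A$ is a proper subset of $\mathrm{aff}\left(A\right)$, and therefore $A$ has a relative boundary point $x_{0}$. This is precisely where the line-free hypothesis is used: it guarantees that a boundary point, and hence a supporting hyperplane, exists.

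At $x_{0}$ choose a supporting hyperplane $H$ of $A$ within $\mathrm{aff}\left(A\right)$ and set $F=A\cap H$. Then $F$ is non-empty, closed, and convex, its affine dimension is at most $d-1$, and since $F\subset A$ it contains no line. By the induction hypothesis $F$ has an extreme point $e$. It remains to lift this: I would show that $e$ is extreme in $A$. Writing $H=\left\{x:\ell\left(x\right)=c\right\}$ with $\ell\leq c$ on $A$, if $e=\tfrac{1}{2}\left(p+q\right)$ with $p,q\in A$, then $\ell\left(p\right)=\ell\left(q\right)=c$ forces $p,q\in H$ and hence $p,q\in F$; extremeness of $e$ in $F$ then gives $p=q=e$. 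Thus $e$ is an extreme point of $A$, completing the induction.

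The routine parts are the recession computation and the supporting-hyperplane construction, both standard. The step I expect to be the crux is organising the induction correctly: ensuring that the exposed face $F$ is genuinely of lower dimension and still line-free, and verifying that an extreme point of $F$ remains extreme in the larger set $A$. These hinge on the supporting-hyperplane theorem and on the functional argument above, so the main care is in the bookkeeping rather than in any single hard estimate.
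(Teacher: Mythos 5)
Your proof is correct, and it is actually more complete than the paper's own argument. For the direction you treat first, the paper argues similarly in spirit but less explicitly: assuming $A$ contains a line $l$ and an extreme point $p$, it takes $B$ to be the closed convex hull of $\{p\}\cup l$, notes $B\subset A$, and asserts that $B$ contains the line through $p$ parallel to $l$, contradicting extremeness of $p$. Your recession-direction computation, $a+tv=\lim_{\lambda\to 0^{+}}\bigl((1-\lambda)a+\lambda(x_{0}+\tfrac{t}{\lambda}v)\bigr)$ together with closedness of $A$, is exactly the limiting step hidden behind the paper's ``it is clear that,'' and it yields the stronger conclusion that \emph{no} point of $A$ is extreme, uniformly, rather than refuting one candidate $p$. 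The genuine divergence is the converse: the proposition claims an ``if and only if,'' but the paper's proof contains no argument at all that a line-free closed convex set possesses an extreme point --- your induction on $\dim\mathrm{aff}(A)$ (pass to a relative boundary point, support $A$ by a hyperplane $H$ inside $\mathrm{aff}(A)$, apply the induction hypothesis to the exposed face $F=A\cap H$, then lift) fills a real gap in the paper. Your lifting step is, in substance, the paper's separate Lemma that $E(H\cap A)\subset E(A)$ for a supporting hyperplane $H$, so the two documents agree on that ingredient. One cosmetic point: the paper defines extreme points via arbitrary relative-interior points of segments rather than midpoints, but your functional argument applies verbatim to any combination $e=(1-t)p+tq$ with $0<t<1$, so nothing is lost.
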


\begin{proof}
Let us assume that $A$ has a line $l$. Suppose that $A$ has an extreme point $p$. It is clear that $p\notin l$. Let $B$ be the closed convex hull of $p$ and $l$. $B$ is a subset of $A$ since $A $ is a closed convex set containing both $p$ and $l$. It is clear that $B$ contains a line passing through $p$ and parallel to $l$ i.e. either $p$ is not an extreme point or the line $l$ does not exist.
\end{proof}

\begin{lemma}
Let $A$ be a non-empty closed convex subset of $E^{n}$. If $H$ is a supporting hyperplane of $A$, then $E\left( H\cap A\right) \subset E\left( A\right) $
\end{lemma}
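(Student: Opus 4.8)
The plan is to argue by contradiction, exploiting the one-sidedness of a supporting hyperplane. Write $F=H\cap A$ and let $x\in E\left( F\right)$; the goal is to show $x\in E\left( A\right)$. Since $H$ supports $A$, there is an affine functional $\varphi$ with $H=\left\{ z:\varphi \left( z\right) =0\right\}$ and $\varphi \geq 0$ on all of $A$, while $\varphi \left( x\right) =0$ because $x\in H$. I would record this one-sided inequality first, since it is the only structural fact about supporting hyperplanes that the argument needs.

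Next I would suppose, toward a contradiction, that $x\notin E\left( A\right)$. By the definition of the profile, this means there is a non-degenerate segment $\left[ pq\right] \subset A$ containing $x$ in its relative interior, so $x=\lambda p+\left( 1-\lambda \right) q$ with $p\neq q$ and $0<\lambda <1$. Applying $\varphi$ and using linearity gives $0=\varphi \left( x\right) =\lambda \varphi \left( p\right) +\left( 1-\lambda \right) \varphi \left( q\right)$. Because $p,q\in A$ both values $\varphi \left( p\right) ,\varphi \left( q\right)$ are $\geq 0$, and a convex combination of two non-negative numbers vanishes only when both vanish; hence $\varphi \left( p\right) =\varphi \left( q\right) =0$, i.e. $p,q\in H$.

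The conclusion then follows: $p,q\in H\cap A=F$, so $\left[ pq\right]$ is a non-degenerate segment lying in $F$ and containing $x$ in its relative interior. This contradicts $x\in E\left( F\right)$, and therefore $x\in E\left( A\right)$, proving $E\left( H\cap A\right) \subset E\left( A\right)$.

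I expect the proof to be essentially routine; the only point requiring care is the passage from the failure of extremality of $x$ in $A$ to a segment that also witnesses failure of extremality in $F$. One must make sure the segment $\left[ pq\right]$ produced in $A$ is genuinely non-degenerate and that $x$ sits in its relative interior, so that once its endpoints are shown to lie on $H$ the same segment lies in $F$. If one prefers to avoid the functional $\varphi$, the identical argument can be phrased purely with the closed half-space $\left\{ \varphi \geq 0\right\}$ containing $A$, observing that an interior-of-segment point lying on the bounding hyperplane forces both endpoints onto that hyperplane.
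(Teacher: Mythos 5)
Your proof is correct and follows essentially the same route as the paper's: assume $x$ fails to be extreme in $A$, use the one-sidedness of the supporting hyperplane to force the witnessing segment into $H$, and contradict extremality in $H\cap A$. The only difference is that you make explicit, via the affine functional $\varphi$, the step the paper states without justification (``$H$ supports $A$ at $p$ and so $\left[ xy\right] \subset H$''), which is a welcome clarification rather than a departure.
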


\begin{proof}
Let $p$ be an extreme point of $H\cap A$. Suppose that $p\notin E\left( A\right) $ i.e. there are $x,y$ in $\partial A$ such that $p\in \left(
xy\right) .$ The hyperplane $H$ supports $A$ at $p$ and so $\left[ xy\right]\subset H$. This implies that $p\in \left[ xy\right] \subset H\cap A$ which contradicts the fact that $p$ is an extreme point of $H\cap A$. This contradiction completes the proof.
\end{proof}

The minimal subset of a compact convex set $A$ which generates $A$ is is its extreme points. Our next main theorem shows that this property is more general.

\begin{theorem}
\label{Th c}Let $A$ be a non-empty closed convex subset of $E^{n}$. If $A$ has no hyperplane and its boundary has no ray, then $A=C\left( E\left(
A\right) \right) $.
\end{theorem}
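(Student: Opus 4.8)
The plan is to reduce the whole statement to the classical (compact) Krein--Milman theorem by a supporting-hyperplane argument, with the two hypotheses pulling their weight in complementary ways. Since $A$ has no hyperplane, Theorem~\ref{th b} applies and gives $A=C(\partial A)$. Because the convex-hull operator is monotone and fixes convex sets, and because $E(A)\subset A$ forces $C(E(A))\subset A$ by convexity of $A$, it will suffice to establish the single inclusion $\partial A\subset C(E(A))$: then $A=C(\partial A)\subset C(C(E(A)))=C(E(A))\subset A$, and equality follows.

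So I would fix an arbitrary $p\in\partial A$ and aim to exhibit extreme points of $A$ whose convex hull already contains $p$. Since $A$ is a closed convex set that, having no hyperplane, is a proper subset of $E^{n}$, there is a supporting hyperplane $H$ of $A$ at $p$. Put $A'=A\cap H$. This is again a closed convex set and it contains $p$; moreover, as all of $A$ lies on one side of $H$ while $A'$ lies on $H$ itself, every point of $A'$ is a boundary point of $A$, i.e. $A'\subset\partial A$.

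This is where the hypothesis on the boundary is decisive. Since $A'\subset\partial A$ and $\partial A$ contains no ray, $A'$ contains no ray either; but a nonempty closed convex set that contains no ray has trivial recession cone and is therefore bounded, so $A'$ is compact. The classical Krein--Milman theorem then gives $A'=C(E(A'))$, and in particular $p\in C(E(A'))$. Finally the lemma on supporting hyperplanes yields $E(A')=E(H\cap A)\subset E(A)$, so $p\in C(E(A'))\subset C(E(A))$. As $p\in\partial A$ was arbitrary, this proves $\partial A\subset C(E(A))$, which together with the first paragraph finishes the proof.

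The step I expect to carry the argument is the reduction to compactness: recognizing that ``$\partial A$ has no ray'' is precisely what makes each slice $A\cap H$ ray-free and hence compact, so that the known compact theory applies inside the hyperplane, while ``$A$ has no hyperplane'' is used only to invoke Theorem~\ref{th b} (and to guarantee that $A$ is proper, so that the supporting hyperplane exists at every boundary point). The two technical facts needing care are the existence of the supporting hyperplane at each $p\in\partial A$ and the standard result that a ray-free closed convex set is bounded. I would also remark that the hypotheses implicitly ensure $E(A)\neq\emptyset$: if $A$ contained a line it would be invariant under translation in that direction, forcing $\partial A$ to contain a ray, contrary to assumption, so by the earlier proposition $A$ does have extreme points.
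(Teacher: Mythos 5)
Your proof is correct and takes essentially the same approach as the paper's: reduce to showing $\partial A\subset C\left( E\left( A\right)\right)$ via Theorem \ref{th b}, slice $A$ with a supporting hyperplane $H$ at each boundary point, observe that $H\cap A\subset \partial A$ is ray-free and hence compact, apply the compact Krein--Milman theorem there, and transfer extreme points using the lemma $E\left( H\cap A\right) \subset E\left( A\right)$. Your only additions are explicit justifications the paper leaves implicit (the recession-cone argument that a ray-free closed convex set is bounded, and the remark that $E\left( A\right)$ is non-empty).
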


\begin{proof}
To prove that $A=C\left( E\left( A\right) \right) $, it suffices to prove that $\partial A\subset C\left( E\left( A\right) \right) $ and by Theorem \ref{th b}, we get that $A=C\left( \partial A\right) \subset C\left( E\left( A\right) \right) \subset A$ and hence $A=C\left( E\left( A\right) \right) $.

Let $p\in \partial A$. If $p$ is an extreme point, then $p\in E\left(A\right) \subset C\left( E\left( A\right) \right) $. Now suppose that $p$ is
not an extreme point i.e. there are $x,y$ in $\partial A$ such that $p\in \left( xy\right) $. Since $A$ is a closed convex set, there is a supporting hyperplane $H$ of $A$ at $p$. It is clear that the set $H\cap A$ is a non-empty closed convex subset of $\partial A$. Since $\partial A$ has no ray, the set $H\cap A$ is bounded i.e. $H\cap A$ is a compact convex set. Therefore, $H\cap A=C\left( E\left( H\cap A\right) \right) $. But $E\left( H\cap A\right)
\subset E\left( A\right) $ and so $p\in C\left( E\left( H\cap A\right)\right) \subset C\left( E\left( A\right) \right) $ and the proof is complete.
\end{proof}

\section{Conclusion}

In this work three types of pairs of boundary points were introduced. They are the main tool in the proof of convexity, starshapedness or affinity of closed sets and their boundaries in $E^{n}$. Moreover, we got some relations between topological and geometrical properties of these sets.
Given a convex set $A$, is it possible to find a minimal subset of $A$ whose convex hull is equal to $A$? If $A$ is compact, Krein-Milman theorem says
that the minimal such subset of $A$ is its extremes. We extend this result for a class of closed convex sets. Theorem \ref{Th c} shows that the closed
convex set can be built up from inside by computing the convex hull of its extremes under certain conditions i.e. $A=C\left( E\left( A\right) \right) $. This theorem is an important refinement of Krein-Milman theorem. The set $A=\left\{\left(x,y,z \right): x^{2}+y^{2}=4,-1\prec x\prec 1, 0 \leq z \leq 2 \right\}$ is a non-closed convex subset of $E^{3}$ satisfying $A=C\left( E\left( A\right) \right) $. So the question now is: Does a characterization of all sets satisfying the relation $A=C\left( E\left( A\right) \right) $ in $E^{n}$ exist? The answer of this question is left as an open problem.

\end{document}